\renewcommand*\subjclass[2][2000]{%
  \def\@subjclass{#2}%
  \@ifundefined{subjclassname@#1}{%
    \ClassWarning{\@classname}{Unknown edition (#1) of Mathematics
      Subject Classification; using '1991'.}%
  }{%
    \@xp\let\@xp\subjclassname\csname subjclassname@#1\endcsname
  }%
}
\newtheorem{theorem}{Theorem}[section]
\newtheorem{lemma}[theorem]{Lemma}
\newtheorem{corollary}[theorem]{Corollary}
\newtheorem{proposition}[theorem]{Proposition}
\theoremstyle{definition}
\numberwithin{equation}{section}
\begin{document}

\title[Quasiconformal harmonic maps between surfaces]{On certain nonlinear elliptic PDE and
quasiconfomal maps between Euclidean surfaces} \subjclass{Primary
30C65 ; Secondary 53C43, 30C35}

\keywords{Isothermal coordinates, harmonic maps, quasiconformal
mappings, PDE, Lipschitz continuous}
\author{David Kalaj}
\address{University of Montenegro, Faculty of Natural Sciences and
Mathematics, Cetinjski put b.b. 81000 Podgorica, Montenegro}
\email{davidk@t-com.me} 
\author{Miodrag Mateljevi\'c}
\address{University of Belgrade, Faculty of Mathematics, Studentski
trg 16, 11000 Belgrade, Serbia}
\email{miodrag@matf.bg.ac.rs}\maketitle

\begin{abstract}
 We  mainly investigate some properties of quasiconformal
mappings between smooth $2$-dimensional surfaces with boundary in
the Euclidean space, satisfying certain partial differential
equations (inequalities) concerning Laplacian, and  in particular
satisfying Laplace equation and show that that these mappings are
Lipschitz. Conformal parametrization of  such  surfaces and  the
method developed in our paper \cite{km}  have important role in this
paper.

\end{abstract}

\section{Introduction}
By $\Bbb U$ we denote the unit disk, by $\Omega$ a domain in $\Bbb
R^2$ and by $S$ a smooth 2-dimensional surface in $\Bbb R^l$, $l\ge
3$.

Let $f$ be a smooth mapping between a Jordan domain $\Omega$ and a
surface $S$ of the Euclidean space $\Bbb R^l$. Consider the
functional
\begin{equation}\label{one}E[f]=\iint_{\Omega} |f_x|^2+|f_y|^2 dx
dy\end{equation}
The stationary points of the energy integral $E[f]$ satisfy the
Euler-Lagrange equation i.e. Laplace equation
\begin{equation}\label{ele}
\Delta f=f_{xx}+f_{yy}=4f_{z\overline z}\equiv 0.
\end{equation}
The mapping $f$ satisfying the relation (\ref{ele}) is called {\it
harmonic}.



Let us define harmonic mappings and quasiconformal mappings between
two smooth 2-dimensional surfaces $S_1\subset \Bbb R^m$ and
$S_2\subset \Bbb R^n$. For every $a\in S_1$ let $X_a(x,y)$ be a {\it
conformal mapping} between the unit disk and a neighborhood
$U_a\subset S_1$ i.e. let $x,y$ be {\it isothermal coordinates} in
$U_a$. The mapping $f$ of the surface $S_1$ into the surface $S_2$
is called {\it harmonic} if for every $a\in S_1$ $f\circ X_a : \Bbb
U \rightarrow  \Bbb R^n$ is harmonic in $\Bbb U$. Let $Y=Y_{f(a)}$
be isothermal coordinates in some neighborhood $V_{f(a)}$ in $Y$. It
means that {\bf }$g=(Y_{f(a)})^{-1}\circ f\circ X_a$  is
$\rho-$harmonic, where $\rho(w)=|Y_u(w)|^2$; we also say that $f$ is
harmonic
 with respect to the metric on $S_2$
inherited from the Euclidean space  $\Bbb R^n$. Let $k\in [0,1)$ and
let $f$ be a homeomorphism between $S_1$ and $S_2$. Let $a\in S^1$
be arbitrary and let $X_a$ be isothermal coordinates in $U_a$.
Similarly let $Y_{f(a)}$ be isothermal coordinates in some
neighborhood $V_{f(a)}$ in $Y$. If for every $a$ the mapping
$g=(Y_{f(a)})^{-1}\circ f\circ X_a$ satisfies the inequality
$|g_{\bar z}|\le k |g_z|$ in $X_a^{-1}(U_a)$, then $f$ is said to be
a $k$ {\it quasiconformal (q.c.) mapping}.

The domain $\Omega$ (the surface $S$) is called $C^{l,\alpha}$
domain (surface) if the boundary $\partial \Omega$ ($\partial S$) is
a compact $C^{l,\alpha}$ 1-dimensional manifold (curve).




In this paper we continue to study the boundary behaviors of q.c.
harmonic mappings between plane domains and Euclidean surfaces.
Notice this important fact, the class of q.c. harmonic mappings
contains conformal mappings (see the section below for boundary
behaviors of conformal mappings).

Further developments of the method presented in \cite{km} leads to
Theorem~\ref{qcie} and Theorem~\ref{qcie1}: it is proved that every
q.c. $C^2$ diffeomorphism $w$ between two plane domains with smooth
boundaries satisfying the inequality
\begin{equation}\label{pdi}|\Delta w|\le M|\nabla
w|^2+N\end{equation} is Lipschitz continuous. The inequality
\eqref{pdi} we will call a {\it Poisson differential inequality}.
These theorems imply corresponding results for q.c. harmonic
mappings between smooth surfaces (Theorem~\ref{theom},
Theorem~\ref{theom1}). This extends the results of the authors
\cite{km} where instead of \eqref{pdi} is assumed that
\begin{equation}\label{pdi0}|\Delta w|\le M| w_z w_{\overline
z}|.\end{equation}

For the background on the theory of q.c. harmonic mappings in the
plane we refer to the papers   \cite{Kalajpub}--\cite{KP},
\cite{Om}--\cite{ps}.

\section{Conformal parametrization}\label{cop}
\begin{proposition}[Kellogg and Warshawski see
\cite{Ko}, \cite{w1} and \cite{w2}]\label{oneone} Let $l\in \Bbb N$,
$0<\alpha\le 1$. If $\Omega$ and $D$ are Jordan domains having
$C^{l,\alpha}$  boundaries and $\omega$ is a conformal mapping of
$\Omega$ onto $D$, then $\omega^{(l)}\in C^{\alpha}(\overline
\Omega).$ In particular $\omega^{(l)}$ is bounded from above on
$\Bbb U$.
\end{proposition}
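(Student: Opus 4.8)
The plan is to reduce the general statement to a single model case and then bootstrap in $l$. First I would introduce Riemann maps $\phi:\mathbb U\to\Omega$ and $\psi:\mathbb U\to D$, so that $\omega=\psi\circ\phi^{-1}$. It suffices to prove the base assertion: \emph{a conformal map of $\mathbb U$ onto a Jordan domain with $C^{l,\alpha}$ boundary has an $l$-th derivative in $C^{\alpha}(\overline{\mathbb U})$ which, moreover, does not vanish on $\overline{\mathbb U}$}. Granting this, $\phi^{-1}$ inherits $C^{l,\alpha}$ regularity from the non-vanishing of $\phi'$ on $\overline{\mathbb U}$ via the inverse function theorem, and the higher-order chain rule (Fa\`a di Bruno) applied to $\psi\circ\phi^{-1}$ puts $\omega^{(l)}$ back in $C^{\alpha}(\overline\Omega)$.

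The heart of the matter is the base case, which I would attack through the boundary correspondence and conjugate-function theory. Writing $f:\mathbb U\to D$ for the Riemann map and $\gamma(s)$ for the arclength parametrization of $\partial D$, the hypothesis that $\partial D$ is $C^{1,\alpha}$ means precisely that the tangent-angle function $\beta(s)=\arg\gamma'(s)$ is of class $C^{\alpha}$. Since $f$ is conformal onto a Jordan domain, $f'$ is zero-free in $\mathbb U$, so $\log f'$ is a well-defined holomorphic function; its imaginary part on the unit circle obeys the geometric identity $\arg f'(e^{i\theta})=\beta(s(\theta))-\theta-\tfrac{\pi}{2}$, where the boundary correspondence $s(\theta)$ is fixed by $ds=\abs{f'(e^{i\theta})}\,d\theta$. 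The real and imaginary parts $\log\abs{f'}$ and $\arg f'$ are harmonic conjugates, related on the boundary by the Hilbert transform.

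The key analytic tool is Privalov's theorem: for $0<\alpha<1$ the conjugate of a $C^{\alpha}$ function on the circle is again $C^{\alpha}$. The difficulty is that the identity above is coupled, since $s(\theta)$ depends on $\abs{f'}$, which is itself the conjugate of $\arg f'$; one cannot simply read off the regularity. I would resolve this circularity by a continuity/fixed-point argument: starting from the a priori continuity of $f$ up to the boundary (Carath\'eodory) and of $\beta$, one first shows $s(\theta)$ is absolutely continuous, then feeds $\arg f'\in C^{\alpha}$ through Privalov to obtain $\log\abs{f'}\in C^{\alpha}$, which upgrades the regularity of $s(\theta)$ and closes the loop. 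The outcome is $\log f'\in C^{\alpha}(\overline{\mathbb U})$, hence $f'=\exp(\log f')\in C^{\alpha}(\overline{\mathbb U})$ with $f'\neq 0$ on $\overline{\mathbb U}$, settling $l=1$.

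For general $l$ I would induct: assuming $f'\in C^{l-1,\alpha}$ and non-vanishing, differentiate the tangent-angle identity, expressing the $\theta$-derivative of $\arg f'$ through $\beta',\dots,\beta^{(l)}$ (which gain from the $C^{l,\alpha}$ hypothesis) and lower-order data already controlled; one further application of the conjugate-function estimate promotes $f^{(l)}$ into $C^{\alpha}$. I expect the main obstacle to be twofold. First, arranging the nonlinear coupling so the fixed-point argument genuinely closes rather than merely reproducing the unknown regularity; this is where non-vanishing of $f'$ on the boundary must be established in tandem, not assumed. Second, the endpoint $\alpha=1$ is genuinely delicate, since Privalov's theorem fails there (the conjugate of a Lipschitz function need only lie in the Zygmund class). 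For $\alpha=1$ I would invoke the sharper Zygmund estimates for the conjugate function, which still yield boundedness of $f^{(l)}$ (all that the stated application requires), or argue by a limiting procedure from $\alpha'<1$. The conformal invariance and Carath\'eodory extension are used freely, so that only the quantitative H\"older bookkeeping remains.
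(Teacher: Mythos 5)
The paper offers no proof of this proposition at all: it is quoted verbatim as a classical result of Kellogg and Warschawski, with the burden carried entirely by the citations \cite{Ko}, \cite{w1}, \cite{w2}. Your outline is therefore not competing with an argument in the paper but with the literature it points to, and in fact it reproduces the standard route of exactly those references: reduction to the disk, the tangent-angle identity $\arg f'(e^{i\theta})=\beta(s(\theta))-\theta-\pi/2$, Privalov's theorem on conjugate functions, and induction on $l$. As a strategy this is correct and is the ``right'' proof.

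Two points deserve sharpening. First, the place where you resolve the circularity is the one genuinely hard step, and your phrasing undersells it: knowing that $s(\theta)$ is \emph{absolutely continuous} is not enough to conclude $\beta(s(\theta))\in C^{\alpha}$ from $\beta\in C^{\alpha}$; you need $s$ to be bi-Lipschitz, i.e.\ $|f'|$ bounded above and below on the circle. That a priori two-sided bound (equivalently, continuity and non-vanishing of $f'$ on $\overline{\mathbb U}$ under a mere Dini or $C^{1}$ hypothesis on the boundary) is Warschawski's separate theorem \cite{w1} and requires its own argument -- via Lindel\"of's theorem on $\arg f'$ or an Ahlfors-type distortion estimate -- before Privalov can be invoked even once. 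Your plan names this obstacle but does not supply the mechanism, so as written the loop does not yet close. Second, your caveat about $\alpha=1$ is well taken and worth making explicit against the statement as printed: for $C^{l,1}$ boundaries the conclusion $\omega^{(l)}\in C^{0,1}$ is in general false (one only gets the Zygmund class, or $C^{\beta}$ for every $\beta<1$), so the proposition should be read at $\alpha=1$ only through its weaker consequence that $\omega^{(l)}$ is bounded -- which, as you observe, is all the paper ever uses.
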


The following theorem can be viewed as an extension of
Proposition~\ref{oneone} and of Riemann mapping theorem.
\begin{theorem}\cite[Theorem~3.1]{jost}\label{theo} Suppose $S$ is a surface with boundary,
homeomorphic to a plane domain $G$ bounded by $k$ circles via a
chart $\psi: \overline G\mapsto S$. Suppose the coefficients of the
metric tensor of $S$ can be defined in this chart by bounded
measurable functions $g_{ij}$  with $g_{11} g_{22} - g_{12}^2\ge
\lambda >0$ in $G$. Then $S$ admits a conformal representation
$\tau\in H_1^2\cap C^{\alpha}(\bar B, \bar G)$, where $B$ is a plane
domain bounded by $k$ circles and $\tau$ satisfies almost everywhere
the conformality relations $$|\tau_x|^2 = |\tau_y|^2 \ \text{and}
\left <\tau_x,\tau_y \right> = 0$$ (Here $(x,y)$ denote the
coordinates of points in $B$, and norms and products are taken with
respect to the metric of $S$).

$\tau$ can be normalized by a three point condition, namely three
points on one of the boundary curves of $S$ can be made to
correspond, respectively, to three given points on the outer
boundary of $B$ which can be taken as the unit circle, or by fixing
the image of an interior point. Furthermore, concerning higher
regularity, $\tau$ is as regular as $S$ , i.e. if $S$ is of class
$C^{m,\alpha}(\bar B)$ ($m\in \Bbb N$, $0<\alpha<1$) or in
$C^\infty$ then also $\tau \in C^{m,\alpha}(\bar B)$ or $\tau \in
C^{\infty}(\bar B)$, respectively. In particular, if $S$ is at least
$C^{1,\alpha}$ then the conformality relations are satisfied
everywhere, and $\tau$ is a diffeomorphism.
\end{theorem}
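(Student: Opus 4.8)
The plan is to reduce the existence of the conformal representation to the measurable Riemann mapping theorem of Ahlfors and Bers together with a classical uniformization of finitely connected plane domains. First I would use the chart $\psi$ to transport the metric to $\overline G$ and write the line element in complex form: with $z=x+iy$, a symmetric positive matrix $(g_{ij})$ gives $ds^2=\lambda\,|dz+\mu\,d\bar z|^2$ for a measurable complex dilatation $\mu=(g_{11}-g_{22}+2ig_{12})/(g_{11}+g_{22}+2\sqrt{g_{11}g_{22}-g_{12}^2})$. The hypotheses that the $g_{ij}$ are bounded and that $g_{11}g_{22}-g_{12}^2\ge\lambda>0$ are exactly what force $\|\mu\|_\infty\le k_0<1$. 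Under this identification the conformality relations $|\tau_x|^2=|\tau_y|^2$, $\langle\tau_x,\tau_y\rangle=0$ are precisely the statement that $\tau^{-1}$ solves the Beltrami equation $w_{\bar z}=\mu\,w_z$.

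Next I would invoke the Ahlfors-Bers theorem to produce a quasiconformal homeomorphism $F$ solving $F_{\bar z}=\mu F_z$ on $\overline G$, so that $F$ straightens the $g$-structure: $F\colon G\to G'$ carries the conformal class of $(G,g)$ to the standard structure on the plane domain $G'=F(G)$. Since $F$ is a homeomorphism it preserves connectivity, so $G'$ is again $k$-connected, and by the Koebe circle-domain theorem there is a conformal map $h\colon B\to G'$ from a domain $B$ bounded by $k$ circles, the outer one normalized to the unit circle. Composing gives $\tau:=F^{-1}\circ h\colon\overline B\to\overline G$, which is conformal with respect to $g$. This $\tau$ lies in $H_1^2=W^{1,2}$ because quasiconformal maps belong to $W^{1,2}$, and the bound $\tau\in C^\alpha(\overline B)$ follows from Mori's distortion estimate in the interior together with the boundary regularity of the conformal factor $h$ supplied by Proposition~\ref{oneone}.

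For the normalization I would use that a solution of the Beltrami equation is determined up to the ambiguity in the choice of circle domain, which is governed by the three-parameter M\"obius group under which circle domains are unique; prescribing three points on the outer boundary, or the image of one interior point, pins down $\tau$. Higher regularity comes by bootstrapping: if $S$, and hence $\mu$, is of class $C^{m,\alpha}$, then Schauder-type estimates for the Beltrami equation upgrade $F$ to $C^{m,\alpha}$ (or $C^\infty$), while $h$ is analytic, so $\tau\in C^{m,\alpha}(\overline B)$. Finally, when $S$ is at least $C^{1,\alpha}$ the dilatation $\mu$ is H\"older continuous, whence $\tau$ is a $C^1$ map with nonvanishing Jacobian and therefore a diffeomorphism; the conformality relations, valid a.e.\ by construction, then hold everywhere by continuity.

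The main obstacle I anticipate is controlling the construction up to the boundary in the presence of multiple connectivity. The interior existence is a clean application of the measurable Riemann mapping theorem, but one must verify that the quasiconformal solution extends to a homeomorphism of the closures with the asserted $C^\alpha$ (and later $C^{m,\alpha}$) regularity on $\overline B$, and that the Koebe normalization is compatible with the three-point condition. Reconciling the low-regularity existence statement, where the $g_{ij}$ are merely bounded and measurable, with the sharp boundary regularity in the smooth case is where the technical effort concentrates, and is exactly the point at which Proposition~\ref{oneone} and quasiconformal boundary-regularity theory are indispensable. (A variational proof, minimizing Dirichlet energy and showing the minimizer is conformal, is an alternative route but faces the same boundary-regularity difficulties.)
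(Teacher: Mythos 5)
First, a point of reference: the paper does not prove Theorem~\ref{theo} at all --- it is imported verbatim from Jost \cite{jost} (Theorem~3.1 there), so there is no ``paper's own proof'' to compare against; the comparison has to be with Jost's argument. Jost's proof is variational in the spirit of Morrey: one minimizes the Dirichlet energy of maps from circle domains into $(\overline G,g)$ subject to the three-point condition, uses the Courant--Lebesgue lemma to get equicontinuity and to rule out degeneration of the minimizing sequence, shows the minimizer satisfies the conformality relations a.e., and then runs elliptic regularity to upgrade smoothness when the metric is smooth. Your route is the other classical one: encode the metric as a Beltrami coefficient $\mu$ with $\|\mu\|_\infty\le k_0<1$ (your formula for $\mu$ and the verification that boundedness of $g_{ij}$ plus $\det g\ge\lambda>0$ force $\|\mu\|_\infty<1$ are correct, since $|\mu|^2=(T-2\sqrt D)/(T+2\sqrt D)$ with $T=g_{11}+g_{22}$, $D=\det g$), solve it by Ahlfors--Bers, and uniformize the image by Koebe's Kreisnormierungssatz. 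This is a legitimate, genuinely different proof; it buys a cleaner existence statement in the measurable category and makes the quasiconformality of the change of coordinates explicit, whereas the variational proof generalizes more readily to the harmonic-map setting that Jost actually needs.

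One soft spot you should repair. For the H\"older bound $\tau\in C^{\alpha}(\bar B)$ in the merely measurable case you appeal to Proposition~\ref{oneone} for the boundary regularity of the conformal map $h:B\to G'=F(G)$; but $\partial G'$ is only the image of circles under a quasiconformal map, i.e.\ a union of quasicircles, not $C^{1,\alpha}$ curves, so Kellogg--Warschawski does not apply there. The correct tool at this stage is quasiconformal boundary theory: since $G$ is a circle domain one can reflect $F$ quasiconformally across each boundary circle and obtain global H\"older continuity of $F^{-1}\circ h$ on $\overline B$ from Mori's theorem applied to the extended map; Proposition~\ref{oneone} only becomes available in the $C^{m,\alpha}$ case, after the Schauder bootstrap has shown $\partial G'$ is smooth. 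With that substitution your outline is sound, modulo the (acknowledged) work of carrying the Schauder estimates for the Beltrami equation up to the boundary.
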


We will make use the following corollary of the previous theorem.
\begin{corollary}\label{coro}
Let $X:\Bbb U\mapsto S$ be a conformal mapping between the unit disk
and a $C^{2,\alpha}$ surface $S$. Then
\begin{eqnarray}\label{in}&c:=\min_{\{z:|z|\le
1\}}{|X_u(z)|}=\min_{\{z:|z|\le 1\}}{|X_v(z)|}>0,\\&C:=\max_{\{z:
|z|\le 1\}}{|X_{uu}(z)|+|X_{uv}(z)|+|X_{vv}(z)|}<\infty,\text{ and }
\\ &|{\log |X_u(w)|^2}_w|\le M'<\infty.
\end{eqnarray}
\end{corollary}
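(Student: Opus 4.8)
The plan is to extract all three estimates from the boundary regularity supplied by Theorem~\ref{theo}. Viewing $\Bbb U$ as the domain $B$ bounded by a single circle (the case $k=1$) and recalling that $S$ is $C^{2,\alpha}$, that theorem tells us the conformal parametrization extends to $\overline{\Bbb U}$ as a map of class $C^{2,\alpha}(\overline{\Bbb U})$ which is a diffeomorphism and which satisfies the conformality relations $\abs{X_u}=\abs{X_v}$ and $\langle X_u,X_v\rangle=0$ everywhere on $\overline{\Bbb U}$. (Any conformal map of $\Bbb U$ onto $S$ differs from the normalized representation $\tau$ of Theorem~\ref{theo} by a M\"obius automorphism of $\Bbb U$, which is real-analytic and a diffeomorphism up to the boundary, so the stated regularity is unaffected.) Everything then follows from this regularity, the compactness of $\overline{\Bbb U}$, and the Cauchy--Schwarz inequality.

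For the first line, the equality of the two minima is immediate from $\abs{X_u}=\abs{X_v}$. For strict positivity, observe that a diffeomorphism is in particular an immersion, so the vectors $X_u,X_v$ are linearly independent at every point of $\overline{\Bbb U}$; combined with conformality this forces $\abs{X_u}=\abs{X_v}>0$ pointwise (equivalently, the area density $\sqrt{\abs{X_u}^2\abs{X_v}^2-\langle X_u,X_v\rangle^2}=\abs{X_u}^2$ of the diffeomorphism is positive). Since $X\in C^1(\overline{\Bbb U})$, the continuous function $z\mapsto\abs{X_u(z)}$ attains a minimum on the compact set $\{\abs{z}\le1\}$, and this minimum $c$ is therefore strictly positive. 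The second line is even more direct: as $X\in C^2(\overline{\Bbb U})$, each of $X_{uu},X_{uv},X_{vv}$ is continuous on $\{\abs{z}\le1\}$ and hence bounded, so their total length has a finite maximum $C$.

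For the third line, write $w=u+iv$ and $\partial_w=\tfrac12(\partial_u-i\partial_v)$. Because $\abs{X_u}^2=\langle X_u,X_u\rangle\ge c^2>0$, the logarithm is differentiable and
\begin{equation*}
\partial_w\log\abs{X_u}^2=\frac{\partial_w\abs{X_u}^2}{\abs{X_u}^2}=\frac{\langle X_{uu},X_u\rangle-i\langle X_{uv},X_u\rangle}{\abs{X_u}^2}.
\end{equation*}
By Cauchy--Schwarz the numerator has modulus at most $\abs{X_u}(\abs{X_{uu}}+\abs{X_{uv}})\le\abs{X_u}\,C$, so dividing by $\abs{X_u}^2\ge c^2$ yields
\begin{equation*}
\abs{\partial_w\log\abs{X_u}^2}\le\frac{C}{\abs{X_u}}\le\frac{C}{c}=:M'<\infty.
\end{equation*}

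The only step with genuine content is the strict positivity $c>0$: it rests entirely on the conformal parametrization being a bona fide $C^{2,\alpha}$ diffeomorphism up to the boundary, which is precisely the regularity upgrade guaranteed by Theorem~\ref{theo} (turning the a priori merely H\"older representation into a $C^{2,\alpha}$ diffeomorphism when $S$ is $C^{2,\alpha}$). Once that boundary regularity is granted, the remaining assertions are routine consequences of continuity on a compact set, and I would expect no serious obstacle beyond correctly invoking it.
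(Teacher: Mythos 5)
Your argument is correct and follows essentially the same route as the paper: the first two bounds are read off from the $C^{2,\alpha}$ boundary regularity and diffeomorphism property in Theorem~\ref{theo}, and the third comes from the same computation of $\partial_w\log|X_u|^2$ followed by Cauchy--Schwarz. The only (immaterial) difference is that the paper first rewrites $-i\langle X_{uv},X_u\rangle$ as $+i\langle X_{uu},X_v\rangle$ using the differentiated orthogonality relation before estimating, whereas you bound the two inner products directly; both yield $M'=C/c$.
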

\begin{proof}
The first two inequalities follow directly from Theorem~\ref{theo}.
For $\rho=\log |X_u(w)|^2$ we have

$$\rho_w=\frac{<X_{uu},X_u>-i<X_{uv},X_u>}{|X_u(w)|^2}=\frac{<X_{uu},X_u>+i<X_{uu},X_v>}{|X_u(w)|^2}.$$
Consequently:
\begin{equation}\label{ineq}
|\rho_w|\le 2 \frac{|X_{uu}|}{|X_u|}\le M'=\frac Cc.
\end{equation}
\end{proof}

\section{The main results}
Firstly we are going to establish a local Lipschitz character of our
mappings.
\begin{theorem}[The main theorem]\label{qcie}
Let $f$ be a quasiconformal $C^2$ diffeomorphism from the plane
domain $\Omega$ onto the plane domain $G$. Let $\gamma_\Omega\subset
\partial \Omega$ and $\gamma_G=f(\gamma_\Omega)\subset \partial G$ be $C^{1,\alpha}$ respectively $C^{2,\alpha}$ Jordan arcs.
If for some $\tau\in \gamma_\Omega$ there exist positive constants
$r$, $M$ and $N$ such that
\begin{equation}\label{newinequ1}
|\Delta f|\le M|\nabla f|^2+N\,, \quad  z\in \Omega\cap
D(\tau,r),\end{equation} then $f$ has bounded partial derivatives in
$\Omega\cap D(\tau,r_\tau)$ for some $r_\tau<r$. In particular it is
a Lipschitz mapping in $\Omega\cap D(\tau,r_\tau)$.
\end{theorem}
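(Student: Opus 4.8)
The plan is to pass, by conformal changes of variables on both sides, to a model situation in which the two boundary arcs are straight, and then to prove a gradient bound up to the flat boundary for the resulting map; transferring that bound back yields the Lipschitz conclusion.

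First I would flatten the two arcs. Using the conformal parametrizations furnished by Corollary~\ref{coro} together with the boundary regularity of conformal maps in Proposition~\ref{oneone}, I choose a conformal map $\phi$ from the half-disk $\Bbb U^{+}=\{z:\abs z<1,\ \mathrm{Im}\,z>0\}$ onto a neighborhood of $\tau$ in $\overline\Omega$, carrying the segment $(-1,1)$ onto a subarc of $\gamma_\Omega$, and a conformal map $\psi$ from a neighborhood of $f(\tau)$ in $\overline G$ onto $\Bbb U^{+}$, carrying the corresponding subarc of $\gamma_G$ onto $(-1,1)$. Since $\gamma_\Omega$ is $C^{1,\alpha}$ and $\gamma_G$ is $C^{2,\alpha}$, Proposition~\ref{oneone} guarantees that $\phi$, $\psi$ and their first derivatives extend H\"older–continuously to the flat part of the boundary, with $\abs{\phi'}$ and $\abs{\psi'}$ bounded above and below there; in particular $\phi$ and $\psi$ are bi-Lipschitz up to that boundary. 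Set $F=\psi\circ f\circ\phi$. Then $F$ maps $\Bbb U^{+}$ into $\Bbb U^{+}$ and carries $(-1,1)$ into the real axis, so that the height $v:=\mathrm{Im}\,F$ satisfies $v\ge 0$ in $\Bbb U^{+}$ and $v\equiv 0$ on $(-1,1)$.

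Next I would verify that both hypotheses survive the reduction. Quasiconformality is conformally invariant, so $F$ is again $k$-quasiconformal, whence $\abs{\nabla F}^{2}\le \tfrac{2(1+k^{2})}{1-k^{2}}\,J_{F}$ pointwise; since the image is bounded, $\iint_{\Bbb U^{+}}J_{F}=\mathrm{Area}\,(F(\Bbb U^{+}))<\infty$, so $F\in W^{1,2}$ and $\Delta F\in L^{1}$. A direct computation shows that \eqref{newinequ1} is preserved: precomposition by the holomorphic $\phi$ multiplies $\Delta$ by the bounded factor $\abs{\phi'}^{2}$, while postcomposition by the holomorphic $\psi$ adds a term whose modulus is at most $C\abs{\nabla F}^{2}$; hence $\abs{\Delta F}\le M_{1}\abs{\nabla F}^{2}+N_{1}$ for new constants $M_{1},N_{1}$. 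Thus it suffices to bound $\abs{\nabla F}$ on a smaller half-disk at the origin.

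The heart of the matter is the gradient bound up to the flat segment, and here I would exploit the nonnegative height $v$. On $(-1,1)$ we have $v_{x}=0$, so $\abs{\nabla v}=v_{y}$ there; writing $u=\mathrm{Re}\,F$, the quasiconformality bound gives $\abs{\nabla F}^{2}\le C\,J_{F}=C(u_{x}v_{y}-u_{y}v_{x})=C\,u_{x}v_{y}$ on the segment, which forces $\abs{\nabla F}\le C'\,v_{y}$ on $(-1,1)$. Hence the whole problem reduces to bounding the normal derivative $v_{y}$ of $v$ from above on the segment. From the Green representation $v(z)=\int_{\partial\Bbb U^{+}}P(z,\zeta)v(\zeta)\,ds(\zeta)+\iint_{\Bbb U^{+}}G(z,w)\,\Delta v(w)\,dA(w)$ on the half-disk, differentiation in the normal direction at a segment point $z_{0}$ gives a boundary integrand supported only on the circular part (since $v$ vanishes on the segment), which is harmless, plus the term $\iint_{\Bbb U^{+}}P(z_{0},w)\,\Delta v(w)\,dA(w)$ with $P\ge 0$ the Poisson kernel. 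Using $\abs{\Delta v}\le\abs{\Delta F}\le M_{1}\abs{\nabla F}^{2}+N_{1}$, the contribution of $N_{1}$ is a bounded Newtonian-type potential, and a bound on the remaining term will yield $v_{y}(z_{0})\le \Lambda$, hence $\abs{\nabla F}\le C'\Lambda$ on the segment and, by interior estimates, on a smaller half-disk; pulling this back through the bi-Lipschitz maps $\phi,\psi$ bounds the partial derivatives of $f$ on some $\Omega\cap D(\tau,r_{\tau})$.

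The main obstacle is precisely the nonlinear term $\iint_{\Bbb U^{+}}P(z_{0},w)\abs{\nabla F}^{2}\,dA\le C\iint_{\Bbb U^{+}}P(z_{0},w)\,J_{F}(w)\,dA$. The finiteness of the total Jacobian integral gives only integrability, not the needed pointwise control: changing variables to the image and using only the crude quasiconformal distortion estimates for $F^{-1}$ makes the resulting kernel integrable near $z_{0}$ only when $k$ is small, so this naive route fails for general $k\in[0,1)$. Overcoming this is exactly where the finer analysis of \cite{km} is required: I would combine the finite-energy bound $\iint J_{F}<\infty$ with the higher integrability of the Jacobian of a quasiconformal map and a localization that makes the relevant area small, so that a fraction of $\sup\abs{\nabla F}$ can be absorbed into the left-hand side and the estimate iterated. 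I expect this absorption–iteration step, rather than the flattening or the bookkeeping in Steps~1–2, to be the crux of the proof.
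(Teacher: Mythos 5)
Your reduction --- flattening both boundary arcs by conformal maps, checking that quasiconformality and the inequality \eqref{newinequ1} survive the composition (with the target arc needing $C^{2,\alpha}$ precisely so that the second derivative of the flattening map is bounded), and then using quasiconformality to reduce a bound on $|\nabla F|$ to a bound on the gradient of the height $v=\mathrm{Im}\,F$, which vanishes on the flattened segment --- is essentially the same bookkeeping as Steps 1 and 2 of the paper's proof. The paper's Step 1 (the explicit domains $H_1\subset H_2$, the diffeomorphism $\Upsilon$, and the conformal map $\phi$ of $D_2$ onto a half-plane) serves exactly the purpose your maps $\phi,\psi$ serve: to produce a map $\hat f$ whose imaginary part $s$ vanishes on the image of the boundary arc and is $C^2$ on the rest of the boundary circle, because the rest of $\partial D_1$ is arranged to lie inside $D$, where $f$ is a $C^2$ diffeomorphism.

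The genuine gap is the step you yourself flag as the crux: controlling $\iint P(z_0,w)\,|\nabla F|^2\,dA(w)$. You correctly diagnose that the naive Green-representation argument fails for general $k\in[0,1)$, but the proposed remedy (higher integrability of the Jacobian, localization, and ``absorbing a fraction of $\sup|\nabla F|$'' into the left-hand side) is not carried out, and it is not a routine absorption: the quantity to be absorbed is an $L^\infty$ norm weighted by a Poisson kernel, and no smallness parameter is available to close such an iteration for arbitrary $M$ and $k$. This missing estimate is precisely the content of the Heinz--Bernstein interior estimate (Proposition~\ref{heb}, quoted from \cite{EH}): if $s$ is continuous on $\overline{\Bbb U}$, is $C^2$ in $\Bbb U$, has $C^2$ boundary data, and satisfies $|\Delta s|\le M_0|\nabla s|^2+N_0$, then $|\nabla s|$ is bounded. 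The paper does not reprove this; it invokes it as a black box after verifying, via Lemma~\ref{lemah} and the chain-rule computations \eqref{posht}--\eqref{nalt}, that $s=\mathrm{Im}\,\hat f$ satisfies its hypotheses, and then recovers $|\partial\hat f|+|\bar\partial\hat f|$ from $|\nabla s|$ by quasiconformality at every interior point (not only on the boundary segment, as in your sketch). Without either citing that estimate or supplying a proof of it, your argument does not close; everything else in your proposal is consistent with the paper's route.
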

We need the following proposition.
\begin{proposition}{\bf (Heinz-Bernstein, see
\cite{EH})}. \label{heb}
 Let $s: \overline{\mathbb{U}} \to
\mathbb{R}$ be a continuous function from the closed unit disc
$\overline{\mathbb{U}}$ into the real line satisfying the
conditions:
\begin{enumerate}\item $s$ is $C^2$ on ${\mathbb{U}}$,
\item $s_b(\theta)= s(e^{i\theta})$ is $C^2$ and
\item $ |\Delta s| \leq M_0 |\nabla s|^2+N_0$, on $\mathbb{U}$ for some
constants $M_0$ and $N_0$.
\end{enumerate} Then the function $|\nabla s|$ is bounded on $\mathbb{U}$.
\end{proposition}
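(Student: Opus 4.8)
The plan is to reduce the problem to estimating a Green potential. First note that $s$, being continuous on the compact set $\overline{\mathbb{U}}$, is bounded, say $\sup_{\mathbb{U}}|s|=m<\infty$. Let $h$ be the Poisson integral of $s_b$; since $s_b\in C^2$, classical potential/Schauder theory gives $h\in C^{1,\alpha}(\overline{\mathbb{U}})$, so $|\nabla h|$ is bounded on $\mathbb{U}$. Setting $u=s-h$ we have $u=0$ on $\partial\mathbb{U}$, $\Delta u=\Delta s$, and $|\nabla s|\le |\nabla u|+|\nabla h|$, while the hypothesis becomes $|\Delta u|\le M_1|\nabla u|^2+N_1$ with new constants. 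It thus suffices to bound $|\nabla u|$, and for this I would use the Green representation $u(z)=\iint_{\mathbb{U}}G(z,\zeta)\,\Delta u(\zeta)\,dA(\zeta)$ (with $G$ the Green function of the disc, suitably normalized). Its decisive advantage is that it involves only the interior Laplacian and the \emph{vanishing} boundary values of $u$, and never the boundary normal derivative of $s$, which is not controlled a priori.

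Differentiating and using $|\nabla_z G(z,\zeta)|\le C/|z-\zeta|$ gives $|\nabla u(z)|\le C\iint_{\mathbb{U}}\frac{M_1|\nabla u(\zeta)|^2+N_1}{|z-\zeta|}\,dA(\zeta)$, where the $N_1$-term is harmless because $1/|z-\cdot|$ is uniformly integrable over the disc. Everything therefore reduces to bounding $\iint_{\mathbb{U}}\frac{|\nabla u(\zeta)|^2}{|z-\zeta|}\,dA(\zeta)$ uniformly in $z$. To feed this estimate I would first secure a Dirichlet bound $\iint_{\mathbb{U}}|\nabla u|^2<\infty$ by the exponential/convexity trick: for a convex increasing $\Phi$ one has $\Delta\Phi(u)=\Phi'(u)\Delta u+\Phi''(u)|\nabla u|^2\ge(\Phi''(u)-M_1\Phi'(u))|\nabla u|^2-N_1\Phi'(u)$, so choosing $\Phi$ (an exponential) with $\Phi''-M_1\Phi'\ge c>0$ makes $\Phi(u)$ a subsolution up to a bounded term, and Green's identity together with $u|_{\partial\mathbb{U}}=0$ converts this into control of the Dirichlet integral.

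The step I expect to be the main obstacle is the quadratic gradient term, which is exactly borderline in dimension two: a bare $L^2$ (energy) bound on $\nabla u$ does not suffice to bound $\iint\frac{|\nabla u|^2}{|z-\zeta|}\,dA$ uniformly, since by H\"older one would need $\nabla u\in L^{q}$ for some $q>4$, and the quadratic nonlinearity prevents a naive bootstrap from closing (this is the same criticality seen for $W^{1,2}$ harmonic-type maps in the plane). My plan to overcome it is to localize: using uniform continuity of $s$ on $\overline{\mathbb{U}}$, on sufficiently small discs the oscillation of $u$ is as small as desired, so the smallness $M_1\cdot\mathrm{osc}\,u<1$ needed to \emph{absorb} the quadratic term into the left-hand side becomes available in an exponentially weighted Caccioppoli estimate with cutoffs. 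This upgrades the local integrability of $\nabla u$, the potential estimate above then yields a local pointwise bound, and a finite cover of $\overline{\mathbb{U}}$ combined with the boundedness of $|\nabla h|$ delivers the global bound on $|\nabla s|$. Managing this absorption against the critical nonlinearity is the crux of the argument.
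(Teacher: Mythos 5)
The paper does not prove this proposition at all: it is quoted verbatim from Heinz \cite{EH} as a known a priori estimate, so there is no internal proof to compare against. Judged on its own, your proposal is a plan rather than a proof, and the gap sits exactly where you place the ``crux.'' Your reduction to $u=s-h$ with $u|_{\partial\mathbb U}=0$ is fine, the exponential trick does yield $\iint_{\mathbb U}|\nabla u|^2<\infty$ (with some care about the boundary term, e.g.\ via the radial mean $m(r)=\frac{1}{2\pi}\int\Phi(u(re^{i\theta}))\,d\theta$, whose boundedness forces $\iint_{D_r}\Delta\Phi(u)$ to stay bounded), and the kernel bound $|\nabla_zG|\le C/|z-\zeta|$ is correct. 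But the step from there to a uniform bound on $\iint_{\mathbb U}|\nabla u(\zeta)|^2|z-\zeta|^{-1}\,dA(\zeta)$ is not closed by ``localize and absorb.'' Small oscillation plus a cutoff gives a Caccioppoli bound $\iint_{B_{r/2}}|\nabla u|^2\le C\,\mathrm{osc}_{B_r}(u)^2+Cr^2$, i.e.\ smallness of local energy, but the Riesz-potential estimate needs a \emph{rate}: $\iint_{B_r(z)}|\nabla u|^2\le Cr^{1+\delta}$ so that the dyadic sum $\sum_j 2^{j}\iint_{|z-\zeta|\sim 2^{-j}}|\nabla u|^2$ converges. Producing that rate requires a Morrey/De Giorgi iteration giving $u\in C^{0,\alpha}$ and then a further bootstrap to push $\alpha$ past $1/2$; none of this is in your sketch, and without it the quadratic term cannot be fed back into the potential estimate. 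There is also a minor circularity: differentiating the Green representation already presupposes finiteness of the very integral you are trying to bound, so the argument must be run on $D_r$ with uniform constants and a limit taken.

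For comparison, the classical route (Heinz, following Bernstein; see also the treatment of natural-growth equations in Ladyzhenskaya--Uraltseva or Gilbarg--Trudinger) avoids the critical potential estimate altogether. One first passes to $\Phi_\pm=e^{\pm Ks}$ with $K\ge M_0$, which satisfy one-sided bounds $\Delta\Phi_\pm\ge -C$; one then proves a boundary Lipschitz estimate $|s(z)-s(e^{i\theta_0})|\le C(1-|z|)$ by an explicit barrier construction that uses the $C^2$ regularity of $s_b$ (this is where hypothesis (2) really enters, and it enters only here); finally one combines this with a scaled interior gradient estimate of Bernstein type, $|\nabla s(z)|\,d(z,\partial\mathbb U)\le C\bigl(\mathrm{osc}_{B(z,d(z))}s+d(z)^2\bigr)$, to conclude. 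If you want to complete a proof yourself, I would redirect your effort from the Green-potential bootstrap to this barrier-plus-interior-estimate scheme, which requires no smallness of $M_0\cdot\mathrm{osc}(s)$ and no Morrey iteration.
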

\begin{proof}[\it{Proof of Theorem~\ref{qcie}}]
Let $r>0$ be sufficiently small positive real number such that
$\Delta=D(\tau,r)\cap \Omega$ is a Jordan domain with $C^{1,\alpha}$
boundary consisting of a circle arc $C(t_0,t_1)$ and an arc
$\gamma_0[t_0,t_1]\subset \gamma$ containing $\tau$. Take
$D=f(\Delta)$. Let $g$ be a conformal mapping of the unit disc onto
$\Delta$. Let $\tilde f=f\circ g$. Since $\Delta \tilde
f=|g'|^2\Delta f$ and $|\nabla \tilde f|^2=|g'|^2|\nabla f|^2$, we
find that, $\tilde f$ satisfies the inequality (\ref{newinequ1})
with $M_1=M$ and $N_1=N\cdot{\inf_{|z|\le 1}|g'(z)|}^{-1}$. We will
prove the theorem for $\tilde f$ and then apply Kellogg's theorem.
For simplicity, we write $f$ instead of $\tilde f$. Let $J$ be a
compact subset of $\gamma_0$ containing $\tau$ but not containing
the points $t_0$ and $t_1$. Let $t\in J$ be arbitrary.

 Step 1  ({\bf Local Construction}). In this step we show that
there are two Jordan domains $D_1$ and $D_2$  in $D$ with
$C^{2,\alpha}$ boundary such that
\begin{enumerate}
\item[(i)] $D_1\subset D_2\subset D$,

\item[(ii)] $\partial D\cap \partial D_2$ is a connected arc
containing the point $w=f(t)$ in its interior,

\item[(iii)] $\emptyset\neq \overline{\partial D_2\setminus \partial
D_1}\subset D$.
\end{enumerate}

Let $H_1$ be the Jordan domain bounded by the Jordan curve
$\gamma_1$ which is composed by the following sequence of Jordan
arcs: $\{y^{1/5}+(2-x)^{1/5}=1,\, 1\le x\le 2\}$;
$\{(2-y)^{1/5}+(2-x)^{1/5}=1,\, 1\le x\le 2\}$; $[(1,2),(-1,2)]$;
$\{(2-y)^{1/5}+(2+x)^{1/5}=1,\, -2\le x\le -1\}$;
$\{y^{1/5}+(2+x)^{1/5}=1,\, -2\le x\le -1\}$ and $[(-1,0),(1,0)]$.
Let $H_2$ be the Jordan domain bounded by the Jordan curve
$\gamma_2$ which is composed by the following sequence of Jordan
arcs: $\{y^{1/5}+(2-x)^{1/5}=1,\, 1\le x\le 2\}$; $[(2,1),(2,2)]$;
$\{(3-y)^{1/5}+(2-x)^{1/5}=1,\, 1\le x\le 2\}$; $[(1,3),(-1,3)]$;
$\{(3-y)^{1/5}+(2+x)^{1/5}=1,\, -2\le x\le -1\}$; $[(-2,2),
(-2,1)]$; $\{y^{1/5}+(2+x)^{1/5}=1,\, -2\le x\le -1\}$ and
$[(-1,0),(1,0)]$. Note that $H_1\subset
H_2\subset[-2,2]\times[0,3]$, $\partial H_1\cap \Bbb R=\partial
H_2\cap \Bbb R=[-1,1]$ and that $\partial H_1,\partial H_2\in
C^{3}$.

Let $\Gamma$ be an orientation preserving arc-length
parameterization of $\gamma=\partial D$ such that for
$s_0\in(0,\mathrm{length}(\gamma))$ there holds: $\Gamma(s_0)=f(t)$.
Let $D^*=\overline{\Gamma'(s_0)}D$, $b=\overline{\Gamma'(s_0)}f(t)$
and $\Gamma^*=\overline{\Gamma'(s_0)}\Gamma$. Then there exists
$r>0$ such that $(b,b+ir]\subset D^*$. Since $\gamma^*=\partial
D^*\in C^{2,\alpha}$, it follows that, there exist $x_0>0$,
$\varepsilon>0$, $y_0\in(0,r/3)$, the $C^{2,\alpha}$ function
$h:[-2x_0,2x_0]\to \Bbb R$, $h(0)=0$, and the domain $D_2^*\subset
D^*$ such that:
\begin{enumerate}
\item
$\Gamma^*([s_0-\varepsilon,s_0+\varepsilon])=\{b+(x,h(x)):
\,x\in[-2x_0,2x_0]\}$,
\item
$D_2^*=\{b+(x,h(x)+y): \,x\in[-2x_0,2x_0],\, y\in(0,3y_0]\}$.
\end{enumerate}

Let $\Upsilon:[-2,2]\times[0,3]\to D_2^*$ be the mapping defined by:
$$\Upsilon(x,y)=b+(xx_0,h(xx_0)+yy_0).$$ Then $\Upsilon$ is a
$C^{2,\alpha}$ diffeomophism.

Take $D_i=\Gamma'(s_0)\cdot\Upsilon(H_i)$, $i=1,2$. Obviously
$D_1\subset D_2\subset D$ and $D_1$ and $D_2$ have $C^{2,\alpha}$
boundary. Observe that $f(t)=\Gamma'(s_0)\overline{\Gamma'(s_0)}
f(t)=\Gamma'(s_0)\Upsilon(0)\in \Gamma'(s_0)\Upsilon([-1,1])=
\partial D_1\cap \partial D_2.$

Step 2 ({\bf Application of Heinz-Bernstein theorem}).   Let $\phi$
be a conformal mapping of $D_2$ onto $H$ such that
$\phi^{-1}(\infty)\in{\partial D_2\setminus
\partial D_1}$. Let $\Omega_1=\phi(D_1)$. Then there exist real
numbers $a,$ $b$, $c$, $d$ such that $a<c<d<b$, $[a,b]=\partial
\Omega_1\cap \Bbb R$ and $l=\phi^{-1}(\partial \Omega_1 \setminus
[c,d])\subset D$. Let $U_1=f^{-1}(D_1)$ and $\eta$ be a conformal
mapping between the unit disc and the domain $U_1$. Then the mapping
$\hat f=\phi\circ f\circ \eta$ is a $C^2$ diffeomorphism of the unit
disc onto the domain $\Omega_1$ such that:
\begin{enumerate}
\item[(a)] $\hat f$ is continuous on the boundary $\Bbb T=\partial \Bbb U$ (it is
q.c.) and

\item[(b)] $\hat f$ is $C^2$ on the set $T_1=\hat f^{-1}(\partial
\Omega_1 \setminus (c,d))$.
\end{enumerate}

Let $s:=\mathrm{Im}\, \hat f$. First, note that (a) implies that $s$
is continuous on $\Bbb T=\partial \Bbb U$. On other hand, as $\hat
f\in C^2$, $s$ satisfies the condition:
\begin{enumerate}
\item[(1)] $s\in C^2(\Bbb U)$.
\end{enumerate}
From (b) we obtain that $s$ is $C^2$ on the set $T_1=\hat
f^{-1}(\partial \Omega \setminus (c,d))$. Furthermore, $s=0$ on
$T_2=\hat f^{-1}(a,b)$;  and therefore $s$ is $C^2$ on $T_2=\hat
f^{-1}(a,b)$. Hence:
\begin{enumerate}
\item[(2)] $s$ is $C^2$ on $\Bbb T=T_1\cup T_2$. In other words, the function $s_b:\Bbb R\to
\Bbb R$ defined by $s_b(\theta)=s(e^{i\theta})$ is $C^2$ in $\Bbb
R$.
\end{enumerate}
In order to apply the interior estimate, we have to prove that
\begin{enumerate}
\item[(3)] $|\Delta s(z)|\le M_0|\nabla s(z)|^2+N_0$, $z\in \Bbb U,$  where
$M_0$ and $N_0$ are constants.
\end{enumerate}
To continue we need the following lemma:
\begin{lemma}\label{lemah}
If $f=u+iv$ is a q.c. mapping satisfying Poisson differential
inequality, then $u$ and $v$ satisfy the Poisson differential
inequality.
\end{lemma}
\begin{proof}
Let $$A:=|\nabla u|^2=2(|u_z|^2+|u_{\bar z}|^2)=\frac
12(|f_z+\overline{f_{\bar z}} |^2+|f_{\bar z}+\overline{f_z}|^2)$$
and $$B:=|\nabla v|^2=2(|v_z|^2+|v_{\bar z}|^2)=\frac
12(|f_z-\overline{f_{\bar z}} |^2+|f_{\bar z}-\overline{f_z}|^2).$$
Then
$$\frac{A}{B}=\frac{|1+\mu|^2}{|1-\mu|^2}$$ where
$\mu={\overline{f_{\bar z}}}/{f_z}$. Since $|\mu|\le k$
\begin{equation}\label{help} \frac{(1-k)^2}{(1+k)^2}\le \frac AB
\le \frac{(1+k)^2}{(1-k)^2}.
\end{equation}
As $$|\Delta f|=|\Delta u+i\Delta v|\le M|\nabla f|^2+N=M(|\nabla
u|^2+|\nabla v|^2)+N,$$  the relation (\ref{help}) yields
$$|\Delta u|\le M\frac{(1+k)^2}{(1-k)^2}|\nabla u|^2+N$$ and
$$|\Delta v|\le M\frac{(1+k)^2}{(1-k)^2}|\nabla v|^2+N.$$
\end{proof}
Since $\hat f=\phi\circ f\circ \eta$, we obtain
\begin{equation}\label{posht}\partial \hat f=\phi'\partial f \eta',\,\,\,\,\,\bar
\partial \hat f=\phi'\bar \partial f \eta'\end{equation} and
\begin{equation}\label{nalt}\partial \bar \partial \hat f=\frac 14\Delta \hat
f=\frac 14\Delta(\phi\circ f)\cdot |\eta'|^2=(\phi''\partial f\cdot
\bar\partial f+\phi'\partial \bar\partial f)|\eta'|^2.\end{equation}
Now combining (\ref{newinequ1}), (\ref{posht}) and (\ref{nalt}) we
obtain
\[\begin{split}|\Delta \hat f|&\le
4\frac{|\phi''|}{|\phi'|^2}|\partial \hat f||\bar\partial\hat
f|+|\phi'||\Delta f||\eta'|^2\\&\le
4\frac{|\phi''|}{|\phi'|^2}|\partial \hat f||\bar\partial\hat
f|+|\phi'|\left(M|\nabla f|^2+N\right)|\eta'|^2 \\&
\le\frac{|\phi''|}{|\phi'|^2}|\nabla \hat f|^2+M|\nabla \hat
f|^2\cdot
\frac{1}{|\phi'|}+N|\phi'||\eta'|^2\\&=\left(\frac{|\phi''|}{|\phi'|^2}+\frac{M}{|\phi'|}\right)|\nabla
\hat f|^2+N|\phi'||\eta'|^2.\end{split}\]
As $\hat f$ is a $k$- q.c.
mapping using Lemma~\ref{lemah} we have
\begin{equation}\label{nin} |\Delta s|\le \frac{(1+k)^2}{(1-k)^2}\left(\frac{|\phi''|}{|\phi'|^2}+\frac{M}{|\phi'|}\right)
\cdot |\nabla s|^2+N|\phi'||\eta'|^2.
\end{equation}

Proposition~\ref{oneone} implies that the function $|\eta'|$ is
bounded from above by a constant $C_1$, the function $|\phi'|$ is
bounded from below and above  by positive constants $C_2$ and $C_3$
respectively and the function $|\phi''|$ is bounded from above by a
constant $C_4$. Hence $$|\Delta s|\le M_0|\nabla s|^2+N_0,$$ where
$$M_0=\frac{(1+k)^2}{(1-k)^2}\left(\frac{C_4}{C^2_3}+\frac{M}{C_3}\right)\text{
and }N_0=C_2C_1^2N.$$

Proposition~\ref{heb} implies that, the function $|\nabla s|$ is
bounded by a constant $b_t$. Since ${{\hat f}}$ is a $k-$q.c.
mapping, we have
$$(1-k)|\partial{{\hat f}}|\le |\partial{{{\hat
f}}-\overline{\bar\partial{{\hat f}}}}|\le 2|s_z|\le\sqrt 2 b_t.$$
Finally, $$|\partial{{\hat f}}|+|\bar\partial{{\hat f}}|\le \sqrt
2\frac {1+k}{1-k} b_t.$$


Since the mapping $\eta$ is conformal and maps the circle arc
$T=(\phi\circ f\circ \eta)^{-1}(a,b)$ onto the circle arc
$(\phi\circ f)^{-1}(a,b)$, it follows that, it can be conformally
extended across the arc $T'=(\phi\circ f\circ \eta)^{-1}[c,d]$.
Hence, there exists a constant $A$ such that $|\eta'(z)|\ge 2A$ on
$T'$. It follows that there exists $r\in(0,1)$ such that
$|\eta'(z)|\ge A$ in $\{\rho z: z\in T', r\le\rho\le 1\}$. It
follows from the Proposition~\ref{oneone}, that the conformal
mapping $\phi$ and its inverse have the $C^1$ extension to the
boundary. Therefore there exists a positive constant $B$ such that
$|\phi'(z)|\ge B$ on some neighborhood of $\phi^{-1}[c,d]$ with
respect to $D$. Thus, the mapping $f=\phi^{-1}\circ \hat f\circ
\eta^{-1}$ has bounded derivative in some neighborhood of the set
$\eta(T')$, on which it is bounded by the constant $$C= \sqrt 2\frac
{1+k}{1-k} \frac{b_ {t}}{AB}.$$ Then $$|\partial f(z)|+|\bar
\partial f(z)|\le C_0\text{ for all $z\in \Bbb U$ near the arc
$T=\eta(T')$}.$$
\end{proof}
\begin{theorem}\label{qcie1}
Let $f$ be a quasiconformal $C^2$ diffeomorphism from the plane
domain $\Omega$ with $C^{1,\alpha}$ compact boundary onto the plane
domain $G$ with $C^{2,\alpha}$ compact boundary. If there exist
constants $M$ and $N$ such that
\begin{equation}\label{newinequ2}
|\Delta f|\le M|\nabla f|^2+N\,, \quad  z\in \Omega ,\end{equation}
then $f$ has bounded partial derivatives in $\Omega$. In particular
it is a Lipschitz mapping in $\Omega$.
\end{theorem}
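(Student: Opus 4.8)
The plan is to deduce this global statement from the local Theorem~\ref{qcie} by a compactness argument, so that the boundedness of the partial derivatives is reduced to finitely many boundary patches together with an interior bound that comes for free from $C^2$ regularity.

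First I would check that the hypotheses of Theorem~\ref{qcie} are available at every boundary point. Fix $\tau\in\partial\Omega$. Since $\partial\Omega$ is a compact $C^{1,\alpha}$ curve and $\partial G$ is a compact $C^{2,\alpha}$ curve, one may select a subarc $\gamma_\Omega\subset\partial\Omega$ containing $\tau$ that is a $C^{1,\alpha}$ Jordan arc; as $f$ is a homeomorphism of $\overline\Omega$ onto $\overline G$ carrying $\partial\Omega$ onto $\partial G$, its image $\gamma_G=f(\gamma_\Omega)\subset\partial G$ is a $C^{2,\alpha}$ Jordan arc. The inequality \eqref{newinequ2} holds on all of $\Omega$, hence on $\Omega\cap D(\tau,r)$ for every $r>0$, so \eqref{newinequ1} holds near $\tau$. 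Theorem~\ref{qcie} then yields a radius $r_\tau>0$ and a constant $C_\tau<\infty$ with $|\partial f|+|\bar\partial f|\le C_\tau$ on $\Omega\cap D(\tau,r_\tau)$.

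Next I would run the finite-cover argument. The family $\{D(\tau,r_\tau/2):\tau\in\partial\Omega\}$ is an open cover of the compact set $\partial\Omega$, so it has a finite subcover $D(\tau_1,r_{\tau_1}/2),\dots,D(\tau_n,r_{\tau_n}/2)$. Put $W=\bigcup_{j=1}^n D(\tau_j,r_{\tau_j})$, an open neighborhood of $\partial\Omega$, and $C_\partial=\max_j C_{\tau_j}$; then $|\partial f|+|\bar\partial f|\le C_\partial$ on $\Omega\cap W$. The set $K=\overline\Omega\setminus\bigcup_{j=1}^n D(\tau_j,r_{\tau_j}/2)$ is a closed subset of $\overline\Omega$ disjoint from $\partial\Omega$, hence a compact subset of the open domain $\Omega$; since $f\in C^2(\Omega)$, the continuous functions $\partial f$ and $\bar\partial f$ are bounded on $K$ by some $C_K<\infty$.

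Finally, because $\overline\Omega\subseteq W\cup K$, every point of $\Omega$ lies in $\Omega\cap W$ or in $K$, so $|\partial f|+|\bar\partial f|\le\max\{C_\partial,C_K\}$ throughout $\Omega$; on the bounded domain $\Omega$ this gives the asserted Lipschitz continuity. I expect the only delicate point to be the uniform verification of the local hypotheses along the whole boundary, namely that each boundary subarc and its image have the required regularity so that Theorem~\ref{qcie} genuinely applies at every point, since all of the analytic work is already contained in that theorem and the globalization itself is just a finite cover plus the trivial interior estimate.
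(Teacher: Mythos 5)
Your proposal is correct and follows essentially the same route as the paper: apply Theorem~\ref{qcie} at each boundary point, extract a finite subcover of the compact boundary, and combine the resulting boundary estimates with the trivial interior bound coming from $C^2$ regularity on a compact subset of $\Omega$. Your version is slightly more careful than the paper's (the half-radius trick and the explicit verification that each boundary subarc and its image have the regularity required by Theorem~\ref{qcie}), but the substance is identical.
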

\begin{proof}
According to the Theorem~\ref{qcie} for every $t\in
\partial\Omega$ there exists $r_t>0$ such that $f$ has bounded
partial derivatives in $\Omega\cap D(t,r_t)$. Since $\partial
\Omega$ is a compact set it follows that there exist $t_1,\dots,t_m$
such that $\partial \Omega\subset \bigcup_{i=1}^m D(t_i,r_{t_i})$.
It follows that $f$ has bounded partial derivatives in
$\Omega\cap\bigcup_{i=1}^m D(t_i,r_{t_i})$. Since $f$ is a
diffeomorphism in $\Omega$, we obtain that $f$ has bounded
derivatives in the compact set $\Omega\setminus \bigcup_{i=1}^m
D(t_i,r_{t_i})$. The conclusion of the theorem now easily follows.
\end{proof}
\begin{corollary}
Let $\Omega$ be a plane domain with $C^{1,\alpha}$ compact boundary
and $G$ be a plane domain with $C^{2,\alpha}$ compact boundary. If
$w=f(z):\Omega\mapsto G$ is a quasiconformal solution of the
equation
\begin{equation}\label{pdee}\begin{split}\alpha
w_{xx}&+2\beta w_{xy}+\gamma w_{yy}+a_1(z)w^2_x+b_1(z)w_x
w_y+c_1(z)w_y^2\\
&+a(z)w_x+b(z)w_y+c(z)w+d(z)=0,\end{split}
\end{equation} such that
$\alpha,\beta,\gamma\in \Bbb R$, $\alpha>0$,
$\alpha\gamma-\beta^2>0$, $a,b,c,d,a_1,b_1,c_1\in C({\overline
\Omega})$, then $f$ is Lipschitz.
\end{corollary}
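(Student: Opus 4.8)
The plan is to reduce the hypotheses to the Poisson differential inequality $\abs{\Delta f}\le M_0\abs{\nabla f}^2+N_0$ and then invoke Theorem~\ref{qcie1} directly; recall that a quasiconformal classical ($C^2$) solution of \eqref{pdee} is automatically an orientation-preserving diffeomorphism, so the hypotheses of that theorem are met. The one structural difficulty is that the principal part $\alpha w_{xx}+2\beta w_{xy}+\gamma w_{yy}$ is not a multiple of $\Delta w$. However, its coefficients are constant, and the symmetric matrix $A$ with $A_{11}=\alpha$, $A_{12}=A_{21}=\beta$, $A_{22}=\gamma$ is positive definite (since $\alpha>0$ and $\alpha\gamma-\beta^2>0$). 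I would therefore remove the discrepancy with a single linear change of the source variable.

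Concretely, choose the symmetric positive definite matrix $T$ with $TAT=I$ (for instance $T=A^{-1/2}$), and set $\zeta=Tz$, $\Omega'=T(\Omega)$, $\tilde w(\zeta)=w(T^{-1}\zeta)$. A chain-rule computation gives $\alpha w_{xx}+2\beta w_{xy}+\gamma w_{yy}=\Delta_\zeta\tilde w$, so in the new coordinates \eqref{pdee} becomes
\[\Delta\tilde w=-\bigl(\tilde a_1\,\tilde w_\xi^2+\tilde b_1\,\tilde w_\xi\tilde w_\eta+\tilde c_1\,\tilde w_\eta^2+\tilde a\,\tilde w_\xi+\tilde b\,\tilde w_\eta+\tilde c\,\tilde w+\tilde d\bigr),\]
where each new coefficient is a fixed linear combination (with entries of $T$) of the original ones composed with $T^{-1}$, hence again continuous and bounded on the relevant compact set. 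As $T$ is a fixed invertible affine map, $\Omega'$ still has $C^{1,\alpha}$ compact boundary, $\tilde w$ is still a $C^2$ diffeomorphism of $\Omega'$ onto the unchanged target $G$ (which keeps its $C^{2,\alpha}$ boundary), and $\tilde w=w\circ T^{-1}$ is quasiconformal, being the composition of the q.c.\ map $w$ with the linear (hence q.c.) map $T^{-1}$. Thus every hypothesis of Theorem~\ref{qcie1} survives the change of variables.

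It remains to cast the right-hand side into the Poisson form. Since $\abs{\nabla w}$ and $\abs{\nabla\tilde w}$ are comparable (they differ by the fixed factor $T$), the quadratic terms are dominated by $C_1\abs{\nabla\tilde w}^2$; the linear terms are dominated by $C_2\abs{\nabla\tilde w}\le \tfrac{C_2}{2}\bigl(1+\abs{\nabla\tilde w}^2\bigr)$ via the elementary bound $2t\le 1+t^2$; and $\tilde c\,\tilde w+\tilde d$ is bounded because $\tilde w$ takes values in the bounded region $G$ while $\tilde c,\tilde d$ are bounded. Collecting these estimates yields $\abs{\Delta\tilde w}\le M_0\abs{\nabla\tilde w}^2+N_0$ on $\Omega'$. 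Theorem~\ref{qcie1} then gives that $\tilde w$ has bounded partial derivatives on $\Omega'$, and pulling back through the affine map $T$ (whose derivative is constant) shows that $w=\tilde w\circ T$ has bounded partials on $\Omega$, i.e.\ $f$ is Lipschitz.

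The step I expect to be the main obstacle is the treatment of the zeroth-order term $\tilde c\,\tilde w$: its boundedness rests on $w$ mapping into a bounded region, so one must either take $G$ bounded (as is implicit here) or observe that the boundary argument behind Theorem~\ref{qcie} only uses the inequality on relatively compact pieces, where $w$ is automatically bounded. Everything else — the diagonalization of the principal symbol, the invariance of quasiconformality and of boundary regularity under the affine map, and the absorption of the first-order terms — is routine once the change of variables has been set up.
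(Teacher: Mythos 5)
Your proposal is correct and follows essentially the same route as the paper: a constant-coefficient linear change of the source variable diagonalizing the (positive definite) principal part into the Laplacian, followed by the elementary bound $2\abs{t}\le \abs{t}^2+1$ to absorb the first-order terms and the term $c\,w+d$ into the constants of a Poisson differential inequality, and then an appeal to Theorem~\ref{qcie1}. Your extra care in checking that quasiconformality and the boundary regularity survive the affine change of variables, and your remark on the boundedness of the zeroth-order term, only make explicit what the paper leaves implicit.
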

\begin{proof}
Since the partial differential equation (PDE) (\ref{pdee}) is
elliptic, we can choose coordinates $x=\alpha_1 u+\beta_1 v$,
$y=\beta_1 u+\gamma_1 v$ such that (\ref{pdee}) becomes
\begin{equation}\label{begeq}\begin{split}w_{uu}&+w_{vv}+a_1'(u,v)w^2_u+b_1'(u,v)w_u
w_v+c_1'(u,v)w_v^2\\&+a'(u,v)w_u+b'(u,v)w_v+c'(u,v)w+d'(u,v)=0,
\,\,(u,v)\in\Omega'.\end{split}\end{equation} For $e\in
C(\overline{\Omega'})$ let
$|e|=\max\{|e(u,v)|:(u,v)\in\overline{\Omega'}\}$. Using
(\ref{begeq}) and the inequality $2|t|\le |t|^2+1$ we obtain
\[\begin{split}|\Delta w|&\le \left(\frac{|a'|}{2}+\frac{|b'|}{2}\right)(|\nabla w
|^2+1)+\left(\max\{|a_1'|,|c_1'|\}+\frac{|b_1'|}{2}\right)|\nabla w
|^2+|c'||w|+|d'|\\&=M|\nabla w|^2+N,\end{split}\] where
$$M=(|a'|+|b'|)/2+\max\{|a_1'|,|c_1'|\}+\frac{|b_1'|}{2}$$ and
$$N=\frac{|a'|+|b'|}{2}+|c'||w|+|d'|.$$ The conclusion now follows
from Theorem~\ref{qcie1}.
\end{proof}


 By  $d=d_k$  we denote Euclidean distance in Euclidean space $\Bbb R^k$.

\begin{theorem}\label{theom} We call a $C^{2,\alpha}$ surface $S$  disk-like surface
if it is homeomorphic to the unit disk, and if its boundary is a
$C^{2,\alpha}$ curve. If $f$ is a quasiconformal harmonic mapping
between two $C^{2,\alpha}$ disk-like surfaces $S_1$ and $S_2$, then
it is a Lipschitz mapping i.e. there exists a constant $C$ such that
$$d(f(x),f(y))\le Cd(x,y), \text{ for all $x,y\in S_1$}.$$
\end{theorem}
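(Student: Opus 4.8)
The plan is to reduce the statement about surfaces to the planar result already established in Theorem~\ref{qcie1} by passing to conformal (isothermal) coordinates. First I would invoke Theorem~\ref{theo} to obtain conformal parametrizations $X:\overline{\Bbb U}\to \overline{S_1}$ and $Y:\overline{\Bbb U}\to \overline{S_2}$; since $S_1$ and $S_2$ are $C^{2,\alpha}$ disk-like surfaces, the theorem guarantees that $X$ and $Y$ are $C^{2,\alpha}$ diffeomorphisms up to the boundary and that Corollary~\ref{coro} applies to each, giving two-sided bounds $c\le |X_u|=|X_v|$ and $|X_{uu}|+|X_{uv}|+|X_{vv}|\le C$ (and analogously for $Y$). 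Then I would form the composition $g=Y^{-1}\circ f\circ X:\Bbb U\to \Bbb U$, which by the very definitions of harmonic and quasiconformal mappings between surfaces is a $k$-quasiconformal $C^2$ diffeomorphism of the disk to itself.

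The key step is to verify that $g$ satisfies a Poisson differential inequality of the form \eqref{newinequ2}, so that Theorem~\ref{qcie1} applies. The point is that $f$ being harmonic means $g=Y^{-1}\circ f\circ X$ is $\rho$-harmonic with $\rho(w)=|Y_u(w)|^2$, i.e. $g$ satisfies the harmonic-map equation $g_{z\bar z}+\partial_w(\log\rho)(g)\,g_z g_{\bar z}=0$ in the target metric. Writing this out gives $|\Delta g|=|4 g_{z\bar z}|\le 4|\partial_w\log\rho(g)|\,|g_z||g_{\bar z}|$, and the third inequality of Corollary~\ref{coro} (applied to $Y$) bounds $|\partial_w\log\rho|=|\partial_w\log|Y_u|^2|\le M'$. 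Since $|g_z||g_{\bar z}|\le \tfrac14|\nabla g|^2$, this yields $|\Delta g|\le M'|\nabla g|^2$, which is precisely \eqref{newinequ2} with $N=0$. Invoking Theorem~\ref{qcie1} (with $\Omega=G=\Bbb U$, whose boundary is the analytic, hence $C^{1,\alpha}$ and $C^{2,\alpha}$, unit circle) then shows $g$ has bounded partial derivatives on $\Bbb U$, so $|\nabla g|\le L$ for some constant $L$.

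Finally I would transfer this Lipschitz bound back to the surfaces. Since $f=Y\circ g\circ X^{-1}$, the chain rule gives $|\nabla f|\le \|DY\|\,|\nabla g|\,\|DX^{-1}\|$, and the uniform bounds on the first derivatives of $Y$ from above and of $X$ from below (the constants $c$ and $C$ of Corollary~\ref{coro}) make $\|DY\|$ and $\|DX^{-1}\|$ bounded on the compact sets $\overline{\Bbb U}$ and $\overline{S_1}$. Hence $|\nabla f|$ is bounded by a constant, and integrating along (geodesic or Euclidean) paths in $S_1$ converts this derivative bound into the stated global Lipschitz estimate $d(f(x),f(y))\le C\,d(x,y)$.

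I expect the main obstacle to be the bookkeeping in deriving the harmonic-map equation for $g$ in the correct form and confirming that the metric coefficient $\rho=|Y_u|^2$ produces exactly the gradient term controlled by Corollary~\ref{coro}; once that identity is pinned down, the reduction to Theorem~\ref{qcie1} and the transfer of derivative bounds through the conformal charts are routine, relying only on the uniform $C^{2,\alpha}$ regularity of $X$ and $Y$ supplied by Theorem~\ref{theo}.
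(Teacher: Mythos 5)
Your proposal is correct and follows essentially the same route as the paper: conformally parametrize both surfaces via Theorem~\ref{theo}, observe that $g=Y^{-1}\circ f\circ X$ is a $k$-q.c.\ $\rho$-harmonic self-map of the disk satisfying $|\Delta g|\le M'|\nabla g|^2$ by Corollary~\ref{coro}, apply Theorem~\ref{qcie1}, and transfer the gradient bound back through the bi-Lipschitz charts $X$ and $Y$. The only differences are cosmetic (your constant is $M'$ versus the paper's $M'/2$, owing to the normalization of $|\nabla g|^2$).
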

\begin{proof}
Let $f$ be a harmonic q.c. mapping between disk-like surfaces $S_1$
and $S_2$. Let $X:\Bbb U\mapsto S_1$ and $Y:\Bbb U \mapsto S_2$ be
conformal mappings. Let us consider the mapping $g=Y^{-1}\circ
f\circ X$ of the unit disk onto itself. Since $f(X(z))=Y(g(z))$, it
follows that $$|f\circ X_x|^2+|f\circ X_y|^2=|Y_u|^2
(|g_x|^2+|g_y|^2).$$

Hence \begin{equation}\label{two} E[f\circ X]=E_Y[g]=\iint_{\Omega}
|Y_u|^2(|g_x|^2+|g_y|^2) dx dy.
\end{equation}

If we denote $\rho(w)=|Y_u(w)|^2$, then the stationary points of the
energy integral $E_Y[g]$ satisfies the Euler-Lagrange equation
\begin{equation}\label{el}
g_{z\overline z}+{(\log \rho)}_w\circ g g_z\,g_{\bar z}=0.
\end{equation}
Consequently $f\circ X$ is harmonic if and only if $g$ is
$\rho-$harmonic i.e. the mapping satisfying the relation (\ref{el}).
According to the Corollary~\ref{coro}, the mapping $g$ satisfies the
conditions of Theorem~\ref{qcie1}. Namely as $|{(\log \rho)}_w|\le
M$ and $|g_zg_{\bar z}|\le 1/2(|g_z|^2+|g_{\bar z}|^2)$ we can
simply take $M=M'/2$, and $N=0$. Theorem~\ref{qcie1} yields that $g$
is Lipschitz. By Theorem~\ref{theo} it follows that $X$ and $Y$ are
bi-Lipschitz mappings. $f$ is Lipschitz as a composition of
Lipschitz mappings.
\end{proof}

Using Theorem~\ref{theom} we obtain the theorem:
\begin{theorem}\label{theom1}
If $f$ is a quasiconformal harmonic mapping between $C^{2,\alpha}$
surfaces $S_1$ and $S_2$, with $C^{2,\alpha}$ compact boundaries
then it is a Lipschitz mapping i.e. there exists a constant $C$ such
that $d(f(x),f(y))\le Cd(x,y), \text{ for all $x,y\in S_1$}.$
\end{theorem}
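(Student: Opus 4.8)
The plan is to deduce Theorem~\ref{theom1} from Theorem~\ref{theom} by a localization argument entirely analogous to the passage from Theorem~\ref{qcie} to Theorem~\ref{qcie1}. The essential point is that Theorem~\ref{theom} already handles the disk-like case, so the remaining work is to reduce a general surface with $C^{2,\alpha}$ compact boundary to finitely many disk-like pieces and to control the behavior near interior points and away from the boundary.

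First I would cover $S_1$ by finitely many coordinate neighborhoods. For each boundary point $a\in\partial S_1$, I would choose isothermal coordinates $X_a:\mathbb U\to U_a\subset S_1$ as in the definition of harmonic q.c.\ maps, arranged (by Theorem~\ref{theo} and its Corollary~\ref{coro}) so that $U_a$ is a disk-like piece whose boundary meets $\partial S_1$ in an arc containing $a$ in its interior. Likewise I would choose isothermal coordinates $Y_{f(a)}$ on the image side. Then $g=(Y_{f(a)})^{-1}\circ f\circ X_a$ is $\rho$-harmonic and q.c.\ on $\mathbb U$, exactly as in the proof of Theorem~\ref{theom}; applying Corollary~\ref{coro} to get $|(\log\rho)_w|\le M'$ and using $|g_zg_{\bar z}|\le\frac12(|g_z|^2+|g_{\bar z}|^2)$, the Poisson differential inequality \eqref{pdi} holds for $g$ with $M=M'/2$, $N=0$, and Theorem~\ref{qcie} (the local version) yields that $g$, hence $f$, has bounded partials on a smaller neighborhood of $a$. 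By compactness of $\partial S_1$ finitely many such neighborhoods $U_{a_1},\dots,U_{a_m}$ cover $\partial S_1$, and on each $f$ is Lipschitz.

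Next I would treat the interior. Away from the boundary, for each interior point I would again take isothermal coordinates onto a genuinely disk-like subsurface and apply Theorem~\ref{theom}, or more directly use the interior regularity built into Theorem~\ref{qcie} (via Heinz-Bernstein, Proposition~\ref{heb}) to bound $|\nabla g|$; since $X$ and $Y$ are bi-Lipschitz by Theorem~\ref{theo}, boundedness of the partials of $g$ gives a local Lipschitz bound for $f$. The set $S_1\setminus\bigcup_{i=1}^m U_{a_i}$ is a compact subset of the interior, and a finite subcover by such interior neighborhoods gives a uniform bound there. Combining the boundary estimate with the interior estimate, $|\nabla f|$ is bounded on all of $S_1$.

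Finally, a bound on the partial derivatives in each chart together with the bi-Lipschitz character of the conformal parametrizations yields a global bound on the local dilation of $f$, and since $S_1$ is connected and compact this upgrades to the global Lipschitz estimate $d(f(x),f(y))\le C\,d(x,y)$ by integrating along geodesics (or rectifiable paths of near-minimal length) in $S_1$. The main obstacle I anticipate is bookkeeping at the overlaps and near the two boundary endpoints $t_0,t_1$ of each coordinate arc: as in Theorem~\ref{qcie}, the local construction only controls $f$ on a compact subarc $J$ not containing the endpoints, so one must choose the finite cover of $\partial S_1$ so that every boundary point lies in the \emph{interior} good set of some chart, not merely in its closure. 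Once the cover is chosen with this overlap property, the passage from finitely many local Lipschitz bounds to a single global constant is routine, exactly as in the proof of Theorem~\ref{qcie1}.
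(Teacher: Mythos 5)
Your argument is correct and takes essentially the route the paper intends: the paper offers no proof of Theorem~\ref{theom1} beyond the one-line remark that it follows from Theorem~\ref{theom}, and your localization via isothermal charts near the boundary (using Corollary~\ref{coro} to obtain the Poisson differential inequality for the chart representatives and then invoking the local Theorem~\ref{qcie}), together with compactness in the interior, is precisely the analogue of the passage from Theorem~\ref{qcie} to Theorem~\ref{qcie1} that the paper does spell out. The details you supply, including the care about covering $\partial S_1$ so that each point lies in the interior good set of some chart and the standard upgrade from uniform local Lipschitz bounds to a global constant on a compact connected surface, are the right ones.
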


\subsection{A question.} Recently in \cite{kalan} is proved that, every quasiconformal harmonic mapping between two Jordan
domains $\Omega_1\in C^{1,\alpha}$ and $\Omega_2\in C^{2,\alpha}$ is
bi-Lipschitz, and this can be extended directly to all
$C^{j,\alpha}$, $j=1,2$ plane domains. On the other hand, this
result has been extended in \cite{surfaces} to the $C^{2,\alpha}$
surface with approximately analytic metrics. The question arises,
whether the previous statement can be extended to $C^{1,\alpha}$
surfaces?

\end{document}